\newtheorem{theorem}{Theorem}[section]
\newtheorem{proposition}[theorem]{Proposition}
\newtheorem{lemma}[theorem]{Lemma}
\newtheorem{corollary}[theorem]{Corollary}
\newtheorem{remark}[theorem]{Remark}
\def\mcE{\mathcal{E}}
\def\mcQ{\mathcal{Q}}
\def\R{{\mathbbm R}}
\def\eps{\varepsilon}
\def\wt{\widetilde}
\def\<{\langle}
\def\>{\rangle}
\numberwithin{equation}{section}
\begin{document}
\title[whether $p$-conductive homogeneity holds depends on $p$]{whether $p$-conductive homogeneity holds depends on $p$}

\author{Shiping Cao}
\address{Department of Mathematics, University of Washington, Seattle, WA 98195, USA}
\email{spcao@uw.edu}
\thanks{}

 \author{Zhen-Qing Chen}
\address{Department of Mathematics, University of Washington, Seattle, WA 98195, USA}\email{zqchen@uw.edu}
\thanks{}

\subjclass[2020]{Primary 31E05}

\date{}

\keywords{{S}ierpi\'{n}ski carpets, $p$-energy, effective conductance}

\begin{abstract}
We introduce two fractals, in Euclidean spaces of dimension two and three respectively, such the $2$-conductive homogeneity holds but there is some $\eps \in (0, 1)$ so that the $p$-conductive homogeneity fails for every $p\in (1, 1+\eps)$. In addition, these two fractals have Ahlfors regular conformal dimension within the interval $(1, 2)$ and $(2, 3)$,  respectively.
\end{abstract}

\maketitle

\section{Introduction}\label{sec1}
Some new progress \cite{CQ,CQ2,shi,Ki2} has been made in recent years on the construction of $p$-energies and therefore Sobolev space ${\mathcal W}^{1, p}$ on Sierpi\'{n}ski-like fractals for $p\in (1, \infty)$,   based on the framework of Kusuoka-Zhou \cite{KZ}. The idea is to define the $p$-energy forms as the $\Gamma$-limit of discrete $p$-energies on graph partitions of the fractals. To show the existence of a good limit, a crucial step is to verify certain inequalities of effective conductances, which are called  conditions (B1) and (B2) in    Kusuoka-Zhou \cite{KZ} when $p=2$. In a recent work, 
  Kigami   \cite[Definition 3.4]{Ki2}
 introduced a  $p$-conductive homogeneity  condition for $p>1$   
 as the $p$-counterpart   of  \cite[conditions (B1) and (B2)]{KZ}; see the paragraph following \cite[Definition 1.2 on p.6]{Ki2}. 
 This $p$-conductive homogeneity  condition plays an important role for some key properties of the Sobolev spaces  ${\mathcal W}^{1, p}$ 
 defined in \cite{Ki2}.

\medskip

It is a natural  question if $p$-conductive homogeneity of a compact metric space $K$ holds for some $p\in (1,\infty)$, then it holds for all  $p\in (1,\infty)$. It is shown recently by Murugan and  Shimizu \cite[Theorem C.28]{MS} that $p$-conductive homogeneity holds for the standard planar Sierpi\'{n}ski carpet equipped with the self-similar measure with the equal weight for any $p\in (1, \infty)$,   where the associated covering system is chosen to be the set of all pairs of cells of the same level that share a common border line. In this paper, we will show, however, that this is not true for general compact metric spaces. We show that there are  two fractals $F^{(2)}$ and  $F^{(3)}$ in dimension 2 and 3, respectively, so that `$p$-conductive homogeneity' holds for $p=2$ but fails for   $p\in (1, 1+\eps)$ for some $\eps \in (0, 1)$ in the sense of Remark \ref{R:1.2}.  
The fractal $F^{(3)}$ has Ahlfors regular conformal dimension   strictly larger than 2.
 To circumvent the issue about  the correct definition of neighbor disparity constants, we use  capacity 
(effective conductance) estimates 
to show that for each of these two fractals,
 there is some $\eps \in (0, 1)$ so that $p$-conductive homogeneity can not hold for   $p\in (1, 1+\eps)$ and for any of its covering systems.

\medskip 

We now describe these two fractals  $F^{(2)}$ and  $F^{(3)}$  in detail.  Fractal $F^{(2)}$ is an example of unconstrained planar Sierpi\'{n}ski  carpets considered in Cao and Qiu \cite{CQ},  while $F^{(3)}$ is an example of unconstrained Sierpi\'{n}ski carpets   in $\R^3$  studied in  Cao and Qiu  \cite{CQ2}.  

For $d\geq 1$,  let $F_0^{(d)}:=[0,1]^d$ be the unit cube in $\R^d$ and set $\mathcal{Q}^{(d)}_0:=\{F_0^{(d)} \}$. 
For each integer $n\geq 1$,   divide $F_0^{(d)}$ into $5^{nd}$ identical non-overlapping sub-cubes with side length $5^{-n}$.
 Denote the collection of such cubes by $\mcQ^{(d)}_n$:
\begin{equation}\label{e:1.1a}
	\mcQ^{(d)}_n:=\Big\{\prod_{i=1}^d[(l_i-1)/5^n,l_i/5^n]:1\leq l_i\leq 5^n,i=1,\cdots,d\Big\}.
\end{equation}
For each $A\subset \R^d$ and $n\geq 0$,   define 
\begin{equation}\label{e:1.2a}
	\mcQ_n^{(d)}(A):=\{Q\in\mcQ_n^{(d)}: \operatorname{int}(Q)\cap A\neq \emptyset\},
\end{equation}
where $\operatorname{int}(Q)$ stands for
the interior of the closed cube $Q$ in $\R^d$. 

Next, define $F^{(d)}_1$ by erasing from $F^{(d)}_0$ all cubes in $\mcQ^{(d)}_n$ that are attached to the center cube $[2/5, 3/5]^d$ with a $d-1$ dimensional face:  
\[
F_1^{(d)}:=F_0^{(d)}\setminus\Big(\bigcup_{i=1}^d({2}/{5},   {3}/{5})^{i-1}\times\big(( {1}/{5},  {2}/{5})\cup ( {3}/{5},  {4}/{5})\big)
\times( {2}/{5}, {3}/{5})^{d-i}\Big).
\] 
See Figure \ref{fig1} for the picture of $F^{(2)}_1$. Define
$F_n^{(d)}:=\bigcup_{Q\in \mcQ_1^{(d)}(F_1^{(d)})} \Psi_Q(F^{(d)}_{n-1})$ for $n\geq 2$, where, for each $Q\in \bigcup_{n=0}^\infty\mcQ^{(d)}_n$,  $\Psi_Q$ is the orientation preserving affine map from  $F^{(d)}_0$ onto $Q$.  The fractals that we are interested in are 
\[
F^{(d)}:=\bigcap_{n=0}^\infty F^{(d)}_n
\]
with $d=2,3$. See Figure \ref{fig1} for a picture of an approximation of $F^{(2)}$. Note that $F^{(d)}$ is not a generalized Sierpi\'{n}ski carpet  in the sense of \cite[\S 2.2]{BBKT} as the interior connectedness condition (H2) there
is not satisfied.

\begin{figure}[htp]
	\includegraphics[width=4cm]{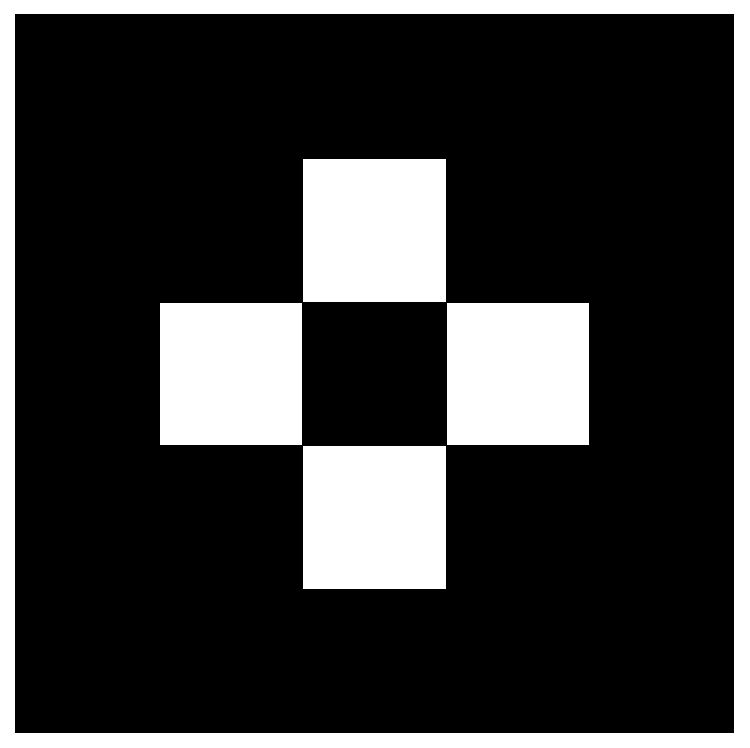}\qquad
	\includegraphics[width=4cm]{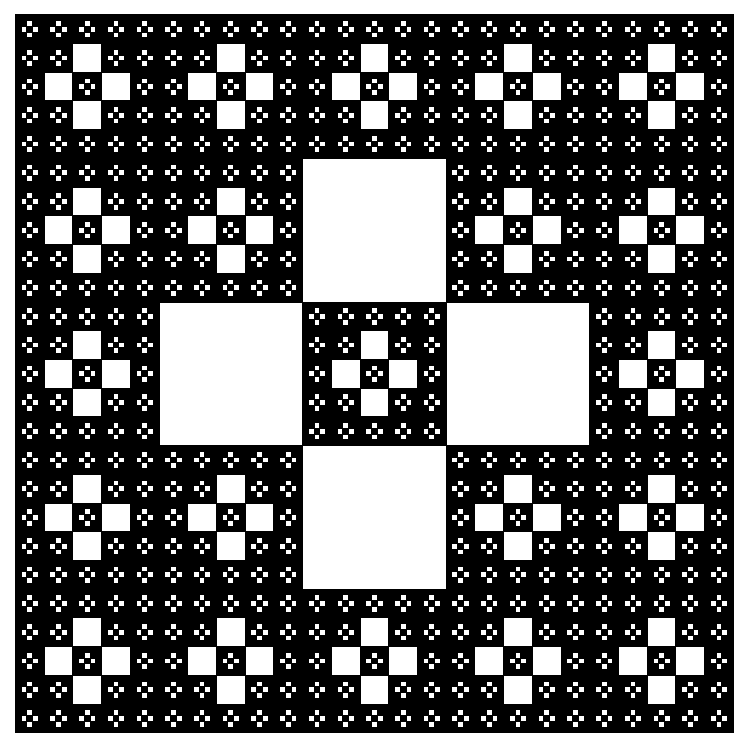}
	\caption{$F_1^{(2)}$ and $F^{(2)}$}\label{fig1}
\end{figure}

Under the Euclidean metric, $F^{(d)}$ is a compact $\alpha$-Ahlfors regular set with respect to the Hausdorff measure on $F^{(d)}$, where 
$\alpha$ is the Hausdorff dimension of $F^{(d)}$, that is, 
\begin{equation}\label{e:1.3a}
\alpha = {\rm dim}_H F^{(d)} = \frac{\log (5^d-2d)}{\log 5}.
\end{equation}

We now  introduce a natural partition of the metric measure space $(F^{(d)},\mu)$, where $\mu$ is the normalized Hausdorff measure on $F^{(d)}$ so that $\mu (F^{(d)})=1$.

\medskip 

\noindent \textbf{Partition of $F^{(d)}$}.  There is a natural partition in the sense of  Kigami \cite[Definition 2.3]{Ki2}, explained as follows. Let $T=\bigcup_{n=0}^\infty \mathcal{Q}_n^{(d)}(F^{(d)})$, and let $\mathcal{A}$ be the subset of $T\times T$ such that $(Q,Q')\in \mathcal{A}$ if and only if $Q\subset Q'$ with $Q\in \mathcal{Q}_{n+1}^{(d)}(F^{(d)}),Q'\in \mathcal{Q}_n^{(d)}(F^{(d)})$ for some $n\geq 0$, or $Q'\subset Q$ with $Q\in \mathcal{Q}_n^{(d)}(F^{(d)}),Q'\in \mathcal{Q}_{n+1}^{(d)}(F^{(d)})$ for some $n\geq 0$. Then, $(T,\mathcal{A},F^{(d)}_0)$ is a rooted tree, where $T$ is the set of vertices, $\mathcal{A}$ is the set of edges and $F^{(d)}_0$ is the root. We assign each $Q\in T$ the subset $\Psi_Q(F^{(d)})$ of $F^{(d)}$. One can check that $\{\Psi_Q(F^{(d)}); Q\in T\}$ is a partition of $F^{(d)}$ that satisfies \cite[Assumption 2.15]{Ki2} with $M_*=M_0=1$. 
  
  \medskip

We next  define the discrete $p$-energy forms for $p\in (1, \infty)$ and effective $p$-conductances.

\medskip

\noindent \textbf{$p$-energy forms}. For $d\geq 2$ and $n\geq 1$,  define the discrete $p$-energy forms on $l\big(\mcQ^{(d)}_n(F^{(d)})\big)$ by
\[
\mathcal{E}^{n}_p(f)=\frac{1}{2}\sum_{Q,Q'\in \mcQ^{(d)}_n(F^{(d)})\atop Q\cap Q'\neq\emptyset}\big(f(Q)-f(Q')\big)^p\quad\hbox{ for each }f\in l\big(\mcQ^{(d)}_n(F^{(d)})\big).
\]

\medskip

\noindent \textbf{Effective $p$-conductances}. For each $n,m\geq 0$ and $A\subset \mcQ^{(d)}_n(F^{(d)})$, define 
\begin{equation}\label{e:1.3}
S^m(A):=\{Q\in  \mcQ^{(d)}_{n+m}(F^{(d)}):Q\subset   Q'\hbox{ for some }Q'\in A  \}.
\end{equation}
For $n\geq 1$ and disjoint $A_1,A_2\subset \mcQ^{(d)}_n(F^{(d)})$, define
\[
\mathcal{E}_{p,m}(A_1,A_2):=\inf \left\{\mathcal{E}_{p}^{n+m}(f):\,f\in l\big(\mcQ^{(d)}_{n+m}(F^{(d)})\big),\,f|_{S^m(A_1)}=1,\,f|_{S^m(A_2)}=0
\right\}.
\]
For short, if $A_1=\{Q\}$ for some $Q\in \mcQ^{(d)}_n(F^{(d)})$ and $A_2\subset \mcQ^{(d)}_n(F^{(d)})$, we write $\mathcal{E}_{p,m}(Q,A_2)$ 
for $\mathcal{E}_{p,m}(\{Q\},A_2)$.

In the notation of \cite{Ki2}, we have $\mathcal{E}_{1,p,m}\big(Q,\mathcal{Q}^{(d)}_n(F^{(d)})\big)=\mathcal{E}_{p,m}\big(Q,\Gamma(Q)^c\big)$ for each $n\geq 1$ and $Q\in \mathcal{Q}^{(d)}_n(F^{(d)})$. We note that \cite[Assumption 2.15(5)]{Ki2} is just \cite[Assumption 2.7]{Ki2}, and  (1)-(4) of \cite[Assumption 2.15]{Ki2} imply 
Assumptions 2.6, 2.10 and 2.12 of \cite{Ki2} by \cite[Proposition 2.16]{Ki2}. Since   $F^{(d)}$  satisfies \cite[Assumption 2.15]{Ki2}
with  partition $\{\Psi_Q(F^{(d)}); Q\in T\}$, we have the following from \cite[Theorem 3.30]{Ki2}.

\begin{lemma} \label{lemma1}  
 For $p>1$, if $F^{(d)}$  is $p$-conductive homogeneous with respect to some covering system in the sense of \cite[Definition 3.4]{Ki2}, then the following holds.  
	
\noindent{\rm ($\textbf{A}_p$)}. There exist some positive constants $\sigma >0$ 
  and $ c_1, c_2 >0$ so that for each $n\geq 1,m\geq 0$ and $Q\in \mcQ^{(d)}_n(F^{(d)})$, 
	\[
	c_1\sigma^{-m}\leq \mathcal{E}_{p,m}(Q,\Gamma(Q)^c)\leq c_2\sigma^{-m},
	\]
	where $\Gamma(Q):=\{Q'\in \mcQ^{(d)}_n(F^{(d)}):\ Q'\cap Q\neq\emptyset\}$ and $\Gamma(Q)^c:=\mcQ^{(d)}_n(F^{(d)})\setminus \Gamma(Q)$.
\end{lemma}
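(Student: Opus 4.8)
The plan is to deduce Lemma~\ref{lemma1} directly from Kigami's general theory developed in \cite{Ki2}; once the hypotheses are matched up, the statement is essentially a restatement of \cite[Theorem 3.30]{Ki2} in the present notation. First I would recall, as noted in the paragraph preceding the lemma, that the partition $\{\Psi_Q(F^{(d)}); Q\in T\}$ of $F^{(d)}$ satisfies \cite[Assumption 2.15]{Ki2} with $M_*=M_0=1$; by \cite[Proposition 2.16]{Ki2} this gives Assumptions 2.6, 2.10 and 2.12 of \cite{Ki2}, and \cite[Assumption 2.15(5)]{Ki2} coincides with Assumption 2.7 of \cite{Ki2}. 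Hence all the standing hypotheses required to run the $p$-conductive-homogeneity machinery in \cite[\S 3]{Ki2} are in force.

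Next, under the hypothesis that $F^{(d)}$ is $p$-conductive homogeneous with respect to some covering system in the sense of \cite[Definition 3.4]{Ki2}, I would invoke \cite[Theorem 3.30]{Ki2}. That theorem yields a scaling ratio $\sigma=\sigma_p>0$ together with constants $c_1,c_2>0$, all depending only on $p$, $d$ and the chosen covering system, such that the effective $p$-conductance across a level-$n$ cell out to its non-neighbors satisfies $c_1\sigma^{-m}\le \mathcal{E}_{1,p,m}\big(Q,\mathcal{Q}^{(d)}_n(F^{(d)})\big)\le c_2\sigma^{-m}$ for all $n\ge 1$, $m\ge 0$ and $Q\in\mathcal{Q}^{(d)}_n(F^{(d)})$. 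Uniformity in $n$ and in $Q$ is precisely what the condition $M_*=M_0=1$ provides: with trivial multiplicities, the combinatorial conductance quantities of \cite{Ki2} reduce to the plain effective $p$-conductances $\mathcal{E}_{p,m}$ defined above, and the self-similarity of $F^{(d)}$ through the maps $\Psi_Q$ makes every cell equivalent.

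Finally I would substitute the identification $\mathcal{E}_{1,p,m}\big(Q,\mathcal{Q}^{(d)}_n(F^{(d)})\big)=\mathcal{E}_{p,m}\big(Q,\Gamma(Q)^c\big)$ recorded before the lemma, with $\Gamma(Q)^c=\mathcal{Q}^{(d)}_n(F^{(d)})\setminus\Gamma(Q)$ playing the role of the complement of the symbolic neighborhood of $Q$, to obtain ($\textbf{A}_p$). The main point requiring care is not a computation but the bookkeeping: one must confirm that the conductance constants in \cite[Theorem 3.30]{Ki2} are genuinely independent of the level $n$ and of the particular cell $Q$ (not merely of $m$), and that the notions $\mathcal{E}^n_p$, $\mathcal{E}_{p,m}$, $S^m(A)$ and $\Gamma(Q)$ used here correspond, under $M_*=M_0=1$ and the chosen covering system, to Kigami's $p$-energy, conductance and neighborhood notions. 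With this dictionary in hand the conclusion is immediate.
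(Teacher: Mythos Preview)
Your proposal is correct and follows exactly the paper's own approach: the paper does not give a separate proof of the lemma but rather states (in the paragraph immediately preceding it) that the result follows from \cite[Theorem~3.30]{Ki2} once one verifies \cite[Assumption~2.15]{Ki2} and uses the identification $\mathcal{E}_{1,p,m}\big(Q,\mathcal{Q}^{(d)}_n(F^{(d)})\big)=\mathcal{E}_{p,m}\big(Q,\Gamma(Q)^c\big)$. Your write-up is simply a more detailed unpacking of that same reduction.
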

 
 \medskip

\begin{remark}\label{R:1.2}  \rm 
In \cite[Definition 3.4]{Ki2}, the definition of $p$-conductive homogeneity of a compact metric space $(K, \rho)$ involves the class of neighbor disparity constants
that  depends on the covering system $\mathscr{J}$ used; cf.  \cite[Definition 2.29 and p.35]{Ki2}.

In this paper, we say that $p$-conductive homogeneity   fails for a compact metric space $(K, \rho)$ if for any  covering system $\mathscr{J}$ (in the sense \cite[Definition 2.29]{Ki2}),  the corresponding $p$-conductive homogeneity condition for $(K, \rho)$ fails. Otherwise, we say that $p$-conductive homogeneity condition holds for $(K, \rho)$.   \qed 
\end{remark}

The following theorem is the main result of this paper,  whose proof will be given in  next section.
 
\begin{theorem}\label{thm1}
On $F^{(d)}$, property  {\rm ($\textbf{A}_p$)} fails  for $p\in (1, \frac{\log 10}{\log 5})$ when $d=2$, and fails  for $p\in (1,\frac{\log16}{\log5})$ when $d=3$. 
\end{theorem}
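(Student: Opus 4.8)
The plan is to argue by contradiction. Suppose property {\rm ($\textbf{A}_p$)} holds, so there are constants $\sigma>0$ and $0<c_1\le c_2$ with
\[
c_1\sigma^{-m}\le \mathcal{E}_{p,m}(Q,\Gamma(Q)^c)\le c_2\sigma^{-m}
\]
for every $n\ge 1$, $m\ge 0$, and $Q\in\mcQ^{(d)}_n(F^{(d)})$. Write $N_2=10$, $N_3=16$. We will show that this is impossible when $p\in\bigl(1,\tfrac{\log N_d}{\log 5}\bigr)$ by exhibiting two families of cells along which $m\mapsto\mathcal{E}_{p,m}(Q,\Gamma(Q)^c)$ has incompatible asymptotics. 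Since $F^{(d)}\subset[0,1]^d$ and $p<2\le d$ throughout the asserted range, testing with the $p$-harmonic function between a cell and the complement of its neighbourhood and comparing with the full grid on $[0,1]^d$ shows $\mathcal{E}_{p,m}(Q,\Gamma(Q)^c)\le C$ uniformly in $Q$ and $m$; hence $\sigma\ge 1$, so under {\rm ($\textbf{A}_p$)} all these conductances are bounded and, if $\sigma>1$, decay geometrically in $m$.

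The first family consists of the \emph{central} cells. For every $n\ge 1$ there is a cell $Q\in\mcQ^{(d)}_n(F^{(d)})$, namely the $n$-fold iterate of the contraction $\Psi_{Q_0}$ onto the centre cube $Q_0=[2/5,3/5]^d$, for which $\Psi_Q(F^{(d)})$ meets $F^{(d)}\setminus\Psi_Q(F^{(d)})$ only at the $2^d$ corners of the cube $Q$; this is immediate from the construction, since the $2d$ cells of $\mcQ^{(d)}_1$ sharing a face with $Q_0$ are precisely the ones erased. For such a $Q$ and any $m\ge 0$, there are exactly $2^d$ edges of the graph $\mcQ^{(d)}_{n+m}(F^{(d)})$ leaving $S^m(Q)$: one at each corner of $Q$, joining the unique level-$(n+m)$ subcell of $Q$ meeting that corner to the unique subcell of the diagonal neighbour of $Q$ meeting it. Using $\mathbbm{1}_{S^m(Q)}$ as competitor gives at once
\[
\mathcal{E}_{p,m}(Q,\Gamma(Q)^c)\le 2^d\qquad\text{for all }m\ge 0,
\]
and, via the flow description $\mathcal{E}_{p,m}(Q,\Gamma(Q)^c)=\bigl(\min_\varphi\sum_e|\varphi_e|^{p/(p-1)}\bigr)^{-(p-1)}$ over unit flows $\varphi$ from $S^m(Q)$ to $S^m(\Gamma(Q)^c)$ — each of which is forced through those $2^d$ corner edges — one obtains the matching two-sided bound
\[
\mathcal{E}_{p,m}(Q,\Gamma(Q)^c)\asymp \min\{1,\ \mathfrak g_{p,m}\},
\]
where $\mathfrak g_{p,m}$ denotes the $p$-conductance in $\mcQ^{(d)}_m(F^{(d)})$ between a fixed corner cell of $F^{(d)}$ and the union of the $d$ opposite faces of $F^{(d)}$. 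The decisive step is to evaluate $\mathfrak g_{p,m}$: since every face of $F^{(d)}$ is a full $(d-1)$-dimensional cube, one can build a unit flow from the corner cell to the opposite faces recursively, spreading it along these solid faces at each scale, and an energy computation shows that this flow has $\tfrac{p}{p-1}$-energy bounded uniformly in $m$ — so that $\mathfrak g_{p,m}\asymp 1$ and therefore $\mathcal{E}_{p,m}(Q,\Gamma(Q)^c)\asymp 1$ — exactly when $p<\tfrac{\log N_d}{\log 5}$; here $N_d$ is the number of cells the flow channel occupies at one level of the self-similar recursion, so $\tfrac{\log N_d}{\log 5}$ is its dimension, and $N_2=10$, $N_3=16$.

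Granting the above, for $p\in\bigl(1,\tfrac{\log N_d}{\log 5}\bigr)$ the central cells give $\mathcal{E}_{p,m}(Q,\Gamma(Q)^c)\asymp 1$, so {\rm ($\textbf{A}_p$)} forces $\sigma=1$, hence $\mathcal{E}_{p,m}(Q',\Gamma(Q')^c)\asymp 1$ for \emph{every} cell $Q'$. The second family supplies the contradiction: one exhibits a cell $Q'$ for which the passage from $S^m(Q')$ to $S^m(\Gamma(Q')^c)$ must cross a copy of $F^{(d)}$ through its pinched interior, so that the central slit in $F_1^{(d)}$ — which provides an $O(1)$-edge cut separating opposite faces of $F_1^{(d)}$ — forces the relevant conductance to contract geometrically under subdivision; thus $\mathcal{E}_{p,m}(Q',\Gamma(Q')^c)\le C\lambda^m$ for some $\lambda\in(0,1)$, throughout $1<p<\tfrac{\log N_d}{\log 5}$. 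This is incompatible with $\mathcal{E}_{p,m}(Q',\Gamma(Q')^c)\asymp 1$, so {\rm ($\textbf{A}_p$)} fails on $F^{(d)}$ for $p$ in the asserted interval, which is the theorem.

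The main obstacle is pinning down the critical exponent for the central cells, i.e. proving that the corner-to-opposite-faces flow in $F^{(d)}$ has uniformly bounded energy precisely for $p<\tfrac{\log N_d}{\log 5}$. This requires exploiting the particular geometry of $F^{(d)}$ — solid faces wrapped around a pinched interior — and carrying out the combinatorics that produces $N_2=10$ and $N_3=16$. By comparison, the $2^d$-edge cut bound for the central cells, and the geometric decay in the second family (which uses only the $O(1)$-edge cut coming from the central slit of $F^{(d)}_1$), are comparatively routine.
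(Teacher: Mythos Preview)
Your proposal contains several substantive errors, and its overall architecture is inverted relative to what actually happens.

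The uniform upper bound you assert at the outset is false. Comparing with the full grid on $[0,1]^d$ gives an upper bound of order $5^{m(d-p)}$ on $\mathcal{E}_{p,m}(Q,\Gamma(Q)^c)$, which diverges for $p<d$; it does not give a constant. In fact, for the corner cell $Q_1=[0,1/5]^d$ one has $\mathcal{E}_{p,m}(Q_1,\Gamma(Q_1)^c)\to\infty$ throughout the asserted range (this is Lemma~\ref{lemma21}), so your deduction that $\sigma\ge 1$ under $(\textbf{A}_p)$ collapses. Your geometric claim about the central cell is also wrong for $d=3$: the cube $Q_2=[2/5,3/5]^3$ has its six face neighbours erased, but its twelve edge neighbours survive, so $\Psi_{Q_2}(F^{(3)})$ meets the rest of $F^{(3)}$ along twelve full edges, not merely at eight corners. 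The indicator $\mathbbm 1_{S^m(Q_2)}$ therefore has energy of order $5^m$, not $2^d$; this is exactly the bound $7(12\cdot 5^m-16)$ in Lemma~\ref{lemma22}. Finally, the ``second family'' does not exist: the central slit of $F_1^{(d)}$ does not separate opposite faces (for $d=2$ the strips $[0,1]\times[0,1/5]$ and $[0,1]\times[4/5,1]$ are untouched), so there is no $O(1)$-edge cut and no mechanism forcing any conductance to decay geometrically in this range of $p$.

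The paper's argument is the reverse of yours and much shorter. One takes the \emph{corner} cell $Q_1=[0,1/5]^d$ and uses the inclusion $[0,1]\times G^{(d-1)}\subset F^{(d)}$, where $G^{(d-1)}$ is the self-similar set generated by the boundary cells of $F_1^{(d-1)}$. This product supplies $\#\mcQ^{(d-1)}_{m+1}\bigl(G^{(d-1)}\cap[0,1/5]^{d-1}\bigr)$ disjoint horizontal paths of length $5^m+1$ from $S^m(Q_1)$ into $S^m(\Gamma(Q_1)^c)$, and a direct H\"older estimate along each path yields
\[
\mathcal{E}_{p,m}(Q_1,\Gamma(Q_1)^c)\ \ge\ \begin{cases}2^m(5^m+1)^{1-p},&d=2,\\ 16^m(5^m+1)^{1-p},&d=3.\end{cases}
\]
The counts $2^m$ and $16^m$ are simply the number of level-$m$ cells of $G^{(1)}$ (a two-piece Cantor set) and $G^{(2)}$ (the sixteen-cell perimeter carpet); this is where the thresholds $\log 10/\log 5$ and $\log 16/\log 5$ come from. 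One then compares with the indicator-function upper bound for the \emph{central} cell $Q_2$ (namely $4$ when $d=2$ and $7(12\cdot 5^m-16)$ when $d=3$). The ratio $\mathcal{E}_{p,m}(Q_1,\Gamma(Q_1)^c)/\mathcal{E}_{p,m}(Q_2,\Gamma(Q_2)^c)$ tends to infinity precisely in the stated ranges of $p$, contradicting $(\textbf{A}_p)$. No conductance needs to tend to zero, and no flow construction is required.
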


 \begin{corollary}\label{C:1.4} 
For $F^{(2)}$, the $p$-conductive homogeneity condition    holds for   $p>\operatorname{dim}_{AR}(F^{(2)},\rho)$  with the covering system 
\[
\mathscr{J}^{(2)}=\left\{\{Q,Q'\}:\,\{Q,Q'\}\subset\mcQ^{(2)}_n(F^{(2)})\hbox{ for some }n\geq 1,\,Q\neq Q',\, Q\cap Q'\neq\emptyset\right\},
\]
however $p$-conductive homogeneity condition fails for $p\in (1, \frac{\log 10}{\log 5})$.

For $F^{(3)}$, $p$-conductive homogeneity condition holds for $p=2$ with the covering system 
\[
\mathscr{J}^{(3)}=\left\{\{Q,Q'\}:\,\{Q,Q'\}\subset\mcQ^{(3)}_n(F^{(3)})\hbox{ for some }n\geq 1,\,Q\neq Q',\,\#(Q\cap Q')>1\right\},
\]
however $p$-conductive homogeneity condition fails for $p\in (1,\frac{\log16}{\log5})$.

Moreover, 
\begin{equation}\label{e:1.4} 
 \frac{\ln 10}{\ln 5} \leq \operatorname{dim}_{AR}(F^{(2)},\rho)  \leq \frac{\log 21}{\log 5} \quad \hbox{and} \quad 
 \frac{\ln 80}{\ln 5} \leq \operatorname{dim}_{AR}(F^{(3)},\rho)  \leq \frac{\log 119}{\log 5},
\end{equation}
where  $\operatorname{dim}_{AR} \big(F^{(d)},\rho \big)$ stands for  the Ahlfors regular conformal dimension  of the metric space $ F^{(d)}$ equipped with Euclidean metric $\rho$; see Remark \ref{R:2.2} for its definition. 
\end{corollary}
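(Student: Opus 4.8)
The plan is to assemble Corollary \ref{C:1.4} from three ingredients: Theorem \ref{thm1} for the failures, the already-available self-similar energy constructions for the two ``holds'' assertions, and direct effective-conductance estimates for the bounds \eqref{e:1.4}. The failures require nothing new: by Lemma \ref{lemma1}, if $F^{(d)}$ were $p$-conductive homogeneous with respect to any covering system then property $(\mathbf A_p)$ would hold, whereas Theorem \ref{thm1} shows $(\mathbf A_p)$ fails on $F^{(2)}$ for $p\in(1,\tfrac{\log 10}{\log5})$ and on $F^{(3)}$ for $p\in(1,\tfrac{\log16}{\log5})$; hence in these ranges $p$-conductive homogeneity fails for every covering system, i.e. in the sense of Remark \ref{R:1.2}. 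For the positive statements I would invoke the self-similar energies constructed elsewhere: for $F^{(2)}$ and $p>\operatorname{dim}_{AR}(F^{(2)},\rho)$ with the covering system $\mathscr J^{(2)}$, the construction of $p$-energies above the Ahlfors regular conformal dimension on unconstrained planar carpets in \cite{CQ} (within Kigami's framework \cite{Ki2}) supplies exactly the conductance and neighbor-disparity comparisons defining $p$-conductive homogeneity; for $F^{(3)}$ and $p=2$ with $\mathscr J^{(3)}$, the self-similar $2$-energy and the verification of the Kusuoka--Zhou conditions (B1)--(B2) for unconstrained carpets in $\R^3$ in \cite{CQ2} are precisely $2$-conductive homogeneity for $\mathscr J^{(3)}$.

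It remains to establish \eqref{e:1.4}. The upper bounds are immediate: by \eqref{e:1.3a} the space $(F^{(d)},\rho)$ is $\alpha$-Ahlfors regular with $\alpha=\tfrac{\log(5^d-2d)}{\log5}$, and since $\rho$ belongs to its own conformal gauge we get $\operatorname{dim}_{AR}(F^{(d)},\rho)\le\tfrac{\log(5^d-2d)}{\log5}$, which is $\tfrac{\log21}{\log5}$ for $d=2$ and $\tfrac{\log119}{\log5}$ for $d=3$. The lower bound for $F^{(2)}$ is then purely formal: if $\operatorname{dim}_{AR}(F^{(2)},\rho)<\tfrac{\log10}{\log5}$, choose $p$ strictly between the two numbers; by the positive statement $p$-conductive homogeneity would hold, while by Theorem \ref{thm1} it would fail, a contradiction. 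Hence $\operatorname{dim}_{AR}(F^{(2)},\rho)\ge\tfrac{\log10}{\log5}$.

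The lower bound for $F^{(3)}$ cannot be obtained in this way, since Theorem \ref{thm1} only rules out $p<\tfrac{\log16}{\log5}$, which is strictly below $\tfrac{\log80}{\log5}$; here I would estimate conductances directly. Let $\mathcal C_p(n)$ denote the effective $p$-conductance between two opposite faces of $F^{(3)}$ computed from the level-$n$ form $\mathcal E^n_p$, equivalently the combinatorial $p$-modulus of the family $\Gamma_n$ of crossing paths. By the combinatorial modulus characterization of the Ahlfors regular conformal dimension (Carrasco Piaggio, Bourdon--Kleiner), which applies here because the faces of $F^{(3)}$ are full squares and crossings are plentiful, one has $\operatorname{dim}_{AR}(F^{(3)},\rho)\ge p$ whenever $\inf_n\mathcal C_p(n)>0$. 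So it suffices to prove $\inf_n\mathcal C_p(n)>0$ for every $p<\tfrac{\log80}{\log5}$. I would do this by the probabilistic (Fuglede) lower bound: construct a self-similar probability measure $\eta_n$ on $\Gamma_n$, put $h_n(Q)=\eta_n(\{\gamma:Q\in\gamma\})$, and apply H\"older's inequality to obtain $\mathcal C_p(n)\ge\big(\sum_Q h_n(Q)^{p/(p-1)}\big)^{-(p-1)}$. A self-similar $\eta_n$ yields a geometric recursion $\sum_Q h_n(Q)^{p/(p-1)}=\Lambda(p)^n$, and $\inf_n\mathcal C_p(n)>0$ precisely when $\Lambda(p)\le1$; the target exponent is exactly the value of $p$ at which $\Lambda(p)=1$.

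The crux, and essentially the entire content of the $F^{(3)}$ lower bound, is the design of $\eta_n$ and the computation of $\Lambda(p)$. A naive measure spread uniformly over the $20$ straight crossing channels of $F^{(3)}_1$ would give base $100$ and the too-large exponent $\tfrac{\log100}{\log5}$, but it is not realizable by genuine crossings: the sub-crossing chosen inside one level-one cell must meet the sub-crossing in the adjacent cell along their shared face, and in three dimensions this inter-cell matching forbids free switching between channels from one cell to the next. Encoding the matching turns the determination of $\Lambda(p)$ into an eigenvalue problem for a transfer operator acting on boundary data of connectivity-respecting crossings, and I expect this operator to have effective base $80$ rather than $100$, giving $\Lambda(p)=1$ exactly at $p=\tfrac{\log80}{\log5}$. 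Carrying out this matching and evaluating the eigenvalue is the main obstacle; the remaining steps are bookkeeping. I would finally emphasize that the resulting inequality $2<\tfrac{\log80}{\log5}\le\operatorname{dim}_{AR}(F^{(3)},\rho)$ puts $p=2$ strictly below the conformal dimension, so the ``holds at $p=2$'' assertion for $F^{(3)}$ is not an instance of any ``$p>\operatorname{dim}_{AR}$'' principle and must rely on the explicit $2$-energy of \cite{CQ2}.
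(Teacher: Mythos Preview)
Your treatment of the failures, the two ``holds'' assertions, and the upper bounds in \eqref{e:1.4} matches the paper's proof exactly. Your argument for the lower bound $\operatorname{dim}_{AR}(F^{(2)},\rho)\ge\frac{\log10}{\log5}$ by contradiction (playing the positive clause against Theorem~\ref{thm1}) is valid and is a pleasant alternative to what the paper does; the paper instead derives both lower bounds uniformly from the conductance estimate of Lemma~\ref{lemma21} together with Kigami's characterization of $\operatorname{dim}_{AR}$ in \cite[Theorems 4.7.6 and 4.9.1]{Ki1}, which states that
\[
\limsup_{m\to\infty}\max_{Q}\big(\mathcal{E}_{p,m}(Q,\Gamma(Q)^c)\big)^{1/m}<1\quad\Longleftrightarrow\quad p>\operatorname{dim}_{AR}(F^{(d)},\rho).
\]

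This is also where your proposal has a genuine gap. For the $F^{(3)}$ lower bound you launch into a combinatorial-modulus/transfer-operator scheme whose key step (``I expect this operator to have effective base $80$'') is left as a conjecture, and you correctly identify the inter-cell matching of sub-crossings as an obstruction to the naive $20$-channel count. But none of this is needed. The estimate already available in Lemma~\ref{lemma21} for $d=3$, namely $\mathcal{E}_{p,m}(Q_1,\Gamma(Q_1)^c)\ge 16^m(5^m+1)^{1-p}$, comes from the elementary observation that $[0,1]\times G^{(2)}\subset F^{(3)}$, where $G^{(2)}$ is the $16$-cell ``boundary'' carpet, so there are $16^m$ pairwise edge-disjoint straight level-$(m+1)$ paths of length $5^m+1$ from $S^m(Q_1)$ to $S^m(\Gamma(Q_1)^c)$. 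Feeding this into Kigami's characterization gives $\operatorname{dim}_{AR}(F^{(3)},\rho)\ge 1+\frac{\log16}{\log5}=\frac{\log80}{\log5}$ immediately; the $80$ is just $5\cdot16$ (path length times number of paths), not an eigenvalue. The same argument with $G^{(1)}$ (a $2$-cell Cantor set) gives the $F^{(2)}$ bound. So replace your transfer-operator program with a one-line appeal to Lemma~\ref{lemma21} and \cite{Ki1}, and the proof is complete.
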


\begin{proof}
For $F^{(2)}$, the first claim is due to \cite[Condition (B) and its proof on page 18]{CQ}   for $p=2$ and the same proof of \cite{CQ} also works for $p>\operatorname{dim}_{AR}(F^{(2)},\rho)$,  while the second claim is a consequence of Lemma \ref{lemma1} and Theorem \ref{thm1}.

For $F^{(3)}$, the first claim is due to \cite[Theorem 8.1]{CQ2}, while the second claim is a consequence of Lemma \ref{lemma1} and Theorem \ref{thm1}

 Assertion \eqref{e:1.4} will be proved in Remark \ref{R:2.2}.
\end{proof} 

\begin{remark}\rm  
When $d\geq 3$, we think  the proof of \cite{CQ} can be suitably modified to show that the $p$-conductive homogeneity holds for every $p> \operatorname{dim}_{AR}(F^{(d)},\rho)$ with the covering system 
\[
\mathscr{J}^{(3)}=\left\{\{Q,Q'\}:\,\{Q,Q'\}\subset\mcQ^{(3)}_n(F^{(3)})\hbox{ for some }n\geq 1,\,Q\neq Q',\,\#(Q\cap Q')>1\right\}.
\]
However, we do not pursue this extension in this paper.  \qed 
\end{remark}

\begin{remark} \rm
The fractal $F^{(3)}$ in particular gives an example of a compact metric space for which the $p$-conductive homogeneity condition holds for some $p =2<
\operatorname{dim}_{AR}(F^{(3)},\rho)$ and fails for any other  
$p \in (1, \frac{\log16}{\log5})$, which is also smaller than $\operatorname{dim}_{AR}(F^{(3)},\rho)$. 
 \end{remark}

 We can say more about about the $p$-conductive homogeneity condition for $F^{(2)}$ by comparing it with a closely related generalized
   Sierpi\'{n}ski carpet  $\wt F^{(2)}$ in $\R^d$ to be defined below. 
   Let $\wt F^{(2)}_0= [0, 1]^2$,  $\wt F_1^{(2)}=[0,1]^2\setminus\big((\frac{2}{5},\frac{3}{5})\times (\frac{1}{5},\frac{4}{5})\bigcup (\frac{2}{5},\frac{3}{5})\times (\frac{1}{5},\frac{4}{5})\big)\subset F_1^{(2)}$, and let $\wt F_n^{(2)}=\bigcup_{Q\in \mcQ^{(2)}_1(\wt F_1^{(2)})}\Psi_Q(\wt F_{n-1}^{(2)})$ for $n\geq 2$. Here as in the above,  $ \Psi_Q$ is the orientation preserving affine map from  $\wt F^{(2)}_0$ onto $Q$. Then $\wt F^{(2)}:=\bigcap_{n=1}^\infty F_n^{(2)}$  is a generalized Sierpinski carpet 
 in the sense of \cite[\S 2.2]{BBKT} having Hausdorff dimension ${\rm dim}_H (\wt F^{(2)}) = \frac{\log 20}{\log 5}$. 
See Figure \ref{fig4} for $\wt F^{(2)}$ and $\wt F_1^{(2)}$.  \begin{figure}[htp]
\includegraphics[width=4cm]{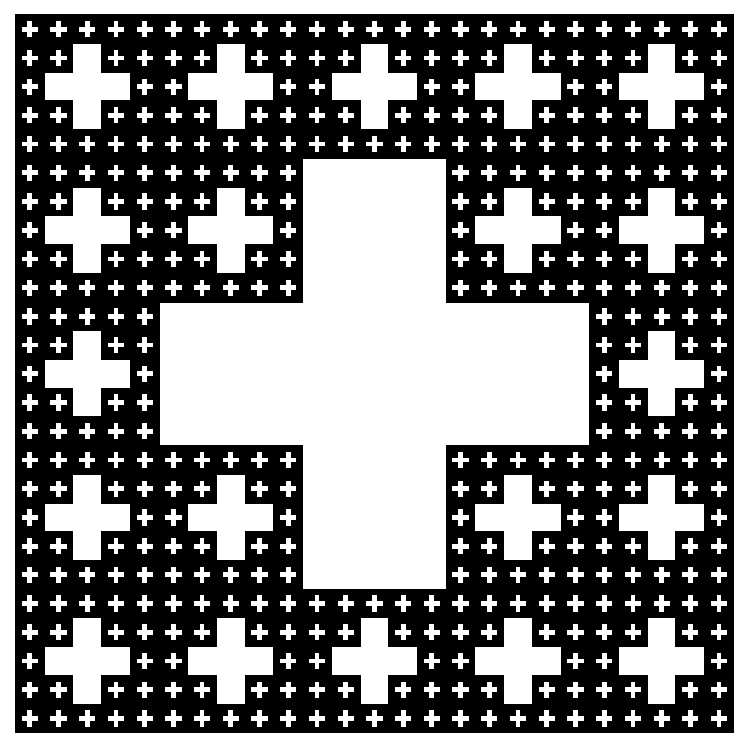}\quad 
\includegraphics[width=4cm]{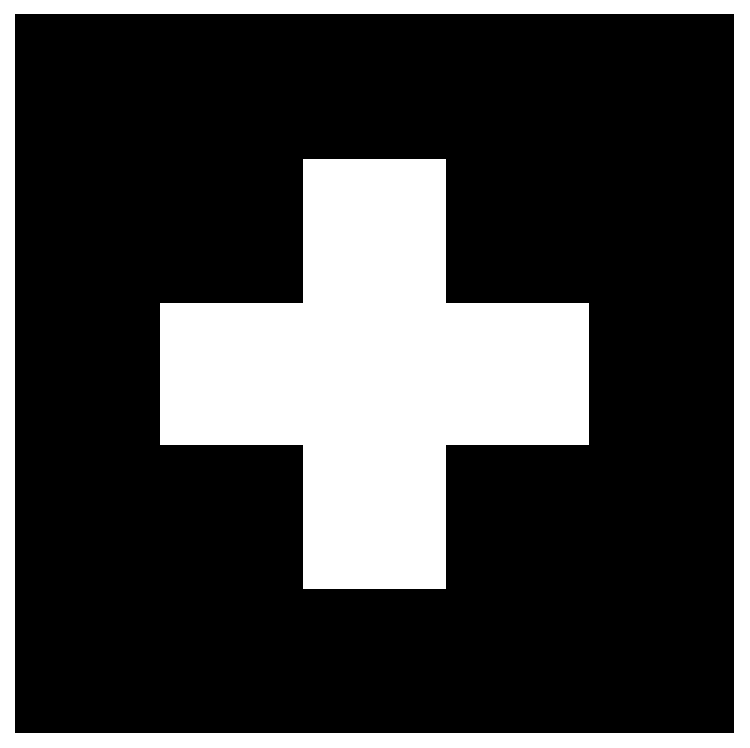}
\caption{The generalized carpet $F$ and its first level approximating $F_1$.}\label{fig4}
\end{figure}

\begin{proposition}\label{P:1.7} 
 Property  {\rm ($\textbf{A}_p$)} fails for $F^{(2)}$ for  $1<p<\dim_{AR}(\wt F^{(2)},\rho)$. 
Consequently, the  $p$-conductive homogeneity   fails for $F^{(2)}$ for  $1<p<\dim_{AR}(\wt F^{(2)},\rho)$. 
\end{proposition}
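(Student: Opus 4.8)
\textbf{Proof strategy for Proposition \ref{P:1.7}.}

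The plan is to compare effective $p$-conductances on $F^{(2)}$ with those on the generalized carpet $\wt F^{(2)}$, exploiting the inclusion $\wt F^{(2)} \subset F^{(2)}$ and the fact that $\wt F_1^{(2)}$ is obtained from $F_1^{(2)}$ by deleting two additional strips of sub-cubes. First I would recall the standard consequence of Kigami's theory (or of the constructions in \cite{CQ,CQ2}) for the generalized Sierpi\'nski carpet $\wt F^{(2)}$: when $p < \dim_{AR}(\wt F^{(2)},\rho)$, the $p$-resistance across a cell of $\wt F^{(2)}$ \emph{grows} to infinity as one refines, in the sense that the quantity $\mathcal{E}_{p,m}^{\,\wt F}(Q,\Gamma(Q)^c)$ tends to $0$ faster than geometrically is \emph{not} the right phrasing — rather, for $\wt F^{(2)}$ there is a resistance scaling factor $\rho_p > 1$ (equivalently a conductance scaling $\sigma_{\wt F} \in (0,1)$) and the key point when $p < \dim_{AR}$ is that this scaling is \emph{not} self-consistent across levels, i.e. the submultiplicative and supermultiplicative chains of conductance estimates diverge. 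Concretely, the subadditivity inequality gives $\mathcal{E}_{p,m_1+m_2}^{\,\wt F}(Q,\Gamma(Q)^c) \le C\, \mathcal{E}_{p,m_1}^{\,\wt F}(Q,\Gamma(Q)^c)\cdot(\text{per-cell factor})^{m_2}$, and the failure of $(\mathbf{A}_p)$ for $\wt F^{(2)}$ below its AR conformal dimension is exactly the statement that the matching lower bound fails, with a gap that compounds in $m$.

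The heart of the argument is the monotonicity step: I would show that for every $n \ge 1$, $m \ge 0$ and $Q \in \mcQ^{(2)}_n(F^{(2)})$ one has $\mathcal{E}_{p,m}(Q,\Gamma(Q)^c) \le C\, \mathcal{E}_{p,m}^{\,\wt F}(Q,\Gamma(Q)^c)$, where the right-hand side is the corresponding quantity computed inside $\wt F^{(2)}$. This is because $S^m(Q)$ computed in $F^{(2)}$ contains, as a subset, $S^m(Q)$ computed in $\wt F^{(2)}$ together with extra cubes coming from the strips that $F_1^{(2)}$ retains but $\wt F_1^{(2)}$ discards; given an optimal test function $f$ on the $\wt F^{(2)}$-network, one extends it to the larger $F^{(2)}$-network by a bounded Lipschitz (in graph distance) interpolation through the extra cubes, paying only a factor depending on $p$ and on the fixed geometry of the level-$1$ pattern (the extra strips have uniformly bounded width in graph metric, so the added edges contribute at most a constant multiple of the original energy). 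Combined with the upper bound $\mathcal{E}_{p,m}^{\,\wt F}(Q,\Gamma(Q)^c) \le C\,\sigma_{\wt F}^{-m}$ and, crucially, the failure of the lower bound on the $\wt F^{(2)}$ side, one concludes that no single exponent $\sigma$ can satisfy $c_1 \sigma^{-m} \le \mathcal{E}_{p,m}(Q,\Gamma(Q)^c) \le c_2 \sigma^{-m}$ uniformly in $m$ — the upper estimate forces $\sigma \le \sigma_{\wt F}$ while the genuine decay rate of $\mathcal{E}_{p,m}^{\,\wt F}$ (and hence an upper bound for it passed through monotonicity in the other direction, or a direct cut argument on $F^{(2)}$) is strictly faster along a subsequence. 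The final sentence of the proposition then follows immediately from Lemma \ref{lemma1}: if $p$-conductive homogeneity held for $F^{(2)}$ with respect to some covering system, then $(\mathbf{A}_p)$ would hold, a contradiction.

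The main obstacle I anticipate is pinning down precisely \emph{why} $(\mathbf{A}_p)$ fails for $\wt F^{(2)}$ when $p < \dim_{AR}(\wt F^{(2)},\rho)$, and transferring that failure cleanly through the comparison. For this I would invoke the characterization of the AR conformal dimension of $\wt F^{(2)}$ via the critical exponent $p_c$ at which the $p$-resistance scaling factor $\rho_p$ crosses the value making the homogeneity consistent (the combinatorial modulus / $p$-energy critical exponent equals $\dim_{AR}$, by the results cited in the introduction and in \cite{MS}); below $p_c$ the network has $\rho_p > 1$ but the two-sided conductance estimate $(\mathbf{A}_p)$ is incompatible with the actual polynomial-in-$5^m$ behavior of crossing conductances, which is governed by a strictly smaller growth rate than $\sigma^{-m}$ along the sequence of refinements, because a single ``bottleneck'' cell in the level-$1$ pattern of $\wt F^{(2)}$ does not allow the conductance to be replenished at the homogeneous rate. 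The careful part is to ensure the comparison function extension in the monotonicity step is genuinely supported on $S^m(Q)$ (respecting the boundary conditions $f = 1$ on $S^m(A_1)$, $f = 0$ on $S^m(A_2)$), which I would handle by choosing the extension to be locally constant on the extra strips, equal to the value of $f$ on the nearest $\wt F^{(2)}$-cube, so that no new large gradients are created and the prescribed boundary values are preserved.
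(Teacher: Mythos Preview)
Your proposal rests on a misconception: you assume that $(\mathbf{A}_p)$ \emph{fails} on the generalized carpet $\wt F^{(2)}$ for $p<\dim_{AR}(\wt F^{(2)},\rho)$ and try to transfer that failure to $F^{(2)}$. In fact the opposite is true: by \cite[Theorem~10.2 and Remark~10.20]{MS}, $\wt F^{(2)}$ satisfies $(\mathbf{A}_p)$ for \emph{every} $p>1$, i.e.\ $\wt\mcE_{p,m}(Q,\Gamma(Q)^c)\asymp\wt\sigma_p^{-m}$ uniformly in $Q$ and $m$. What is special about the regime $p<\dim_{AR}(\wt F^{(2)},\rho)$ is not a failure of two-sided bounds but rather that $\wt\sigma_p<1$, so $\wt\sigma_p^{-m}\to\infty$. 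Consequently your plan of ``transferring the failure of the lower bound on the $\wt F^{(2)}$ side'' cannot work, because there is no such failure.

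Relatedly, you have the monotonicity backwards. You argue at length for $\mcE_{p,m}(Q,\Gamma(Q)^c)\le C\,\wt\mcE_{p,m}(Q,\Gamma(Q)^c)$ via an extension of test functions. What is actually needed (and immediate from $\mcQ^{(2)}_{m+1}(\wt F^{(2)})\subset\mcQ^{(2)}_{m+1}(F^{(2)})$, since restricting a test function on the larger graph to the smaller one only drops terms) is the \emph{reverse} inequality $\mcE_{p,m}(Q_1,\Gamma(Q_1)^c)\ge\wt\mcE_{p,m}(Q_1,\Gamma(Q_1)^c)$ for the corner cell $Q_1=[0,1/5]^2$. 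This gives $\mcE_{p,m}(Q_1,\Gamma(Q_1)^c)\gtrsim\wt\sigma_p^{-m}\to\infty$. The contradiction with $(\mathbf{A}_p)$ on $F^{(2)}$ then comes not from any subtlety on the $\wt F^{(2)}$ side but from Lemma~\ref{lemma22}, which you do not invoke: the \emph{center} cell $Q_2=[2/5,3/5]^2$ of $F^{(2)}$ is attached to the rest only at its four corners, so $\mcE_{p,m}(Q_2,\Gamma(Q_2)^c)\le 4$ for all $m$. One cell has conductance tending to infinity while another stays bounded; that is what kills $(\mathbf{A}_p)$. The role of $\wt F^{(2)}$ is solely to certify the blow-up at $Q_1$, not to exhibit any inhomogeneity itself.
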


\medskip

The proof of Proposition \ref{P:1.7} will be given in Section \ref{S:2}. We conclude this section with two open questions. 
\begin{enumerate}
\item[(i)] It can be shown that $\dim_{AR}(  F^{(2)},\rho) \geq \dim_{AR}(\wt F^{(2)},\rho)$ but we do not know they are the same or not.
We suspect they are. If they are, then Proposition \ref{P:1.7} combined with Corollary \ref{C:1.4} would imply that 
the $p$-conductive homogeneity   holds on  $F^{(2)}$ for  $p> \dim_{AR}( F^{(2)},\rho)$ but fails for $1<p< \dim_{AR}( F^{(2)},\rho)$. 
 
\item[(ii)] Corollary \ref{C:1.4} and Proposition \ref{P:1.7} raise a natural question: if the $p$-conductive homogeneity   holds
on a compact metric space $(K, \rho)$ for some $p>1$, does the $q$-conductive homogeneity hold on $(K, \rho)$ for any $q>p$?
This looks quite plausible but we do not have a solution for it.  The second part of Corollary \ref{C:1.4} shows that on a 
compact metric space $(K, \rho)$ that the $p$-conductive homogeneity  fails for some $p>1$,
 the smallest $q$ that the $q$-conductive homogeneity holds on $(K, \rho)$ is in general different from the Ahlfors regular conformal dimension
 $\dim_{AR}( K,\rho)$ of $K$. 
\end{enumerate}

\section{$p$-conductive homogeneity} \label{S:2} 

In this section, we present the proof for Theorem \ref{thm1} and Proposition \ref{P:1.7}. In the following two lemmas, we consider two cells $Q_1$ and $Q_2$, and deduce some estimates of the effective $p$-conductances. For $Q\in \mcQ^{(d)}_n(F^{(d)})$, recall the definition of $\Gamma (Q)$ from Lemma \ref{lemma1}. For each $n,m\geq 0$ and $A\subset \mcQ^{(d)}_n(F^{(d)})$, recall the definition of $S^m(A)$ from \eqref{e:1.3}.

\begin{lemma}\label{lemma21}
Let $Q_1=[0, {1}/{5}]^d$ (see Figure \ref{fig2} for an illustration). Then 
\[
\begin{aligned}
\mathcal{E}_{p,m}(Q_1,\Gamma(Q_1)^c)&\geq 2^m\,(5^m+1)^{1-p}\quad &\hbox{ for }d=2 \hbox{ and } m\geq 1,\\
\mathcal{E}_{p,m}(Q_1,\Gamma(Q_1)^c)&\geq 16^m\,(5^m+1)^{1-p}\quad &\hbox{ for } d=3 \hbox{ and }  m\geq1.
\end{aligned}
\]
\end{lemma}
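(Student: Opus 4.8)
The plan is to obtain the lower bound by a counting-and-convexity argument, i.e.\ by finding sufficiently many edge-disjoint (or vertex-controlled) paths in the graph $\mcQ^{(d)}_{n+m}(F^{(d)})$ that connect $S^m(Q_1)$ to $S^m(\Gamma(Q_1)^c)$, and then estimating the discrete $p$-energy of any admissible function from below using these paths. Concretely, since $Q_1=[0,1/5]^d$ sits in a corner of $[0,1]^d$, any cell $Q'\in \Gamma(Q_1)^c$ lies at Euclidean distance at least $1/5$ from $Q_1$; so at level $n+m$ the subcells in $S^m(Q_1)$ (a copy of $F^{(d)}_m$ scaled to side $5^{-1}$) must be separated from $S^m(\Gamma(Q_1)^c)$ by a ``corridor'' of width roughly $5^m$ in lattice steps. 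The factor $(5^m+1)^{1-p}$ in the statement is exactly what you get from a single path of length $5^m+1$: if $f=1$ on one end and $f=0$ on the other, then by the power-mean (Jensen) inequality $\sum_{\text{edges on path}} |f(Q)-f(Q')|^p \ge (5^m+1)^{1-p}\big(\sum |f(Q)-f(Q')|\big)^p \ge (5^m+1)^{1-p}$. So what remains is to exhibit $2^m$ such paths when $d=2$ and $16^m$ such paths when $d=3$, pairwise edge-disjoint, so that summing the per-path bounds gives $2^m(5^m+1)^{1-p}$ resp.\ $16^m(5^m+1)^{1-p}$.

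The key steps, in order: (1) Describe the graph $S^m(Q_1)$ explicitly: it is (the cell-graph of) a scaled copy of $F^{(d)}_m$, and by the self-similar construction the number of level-$m$ cells it contains is $(5^d-2d)^m$, i.e.\ $3^m$ for $d=2$ and $119^m$ for $d=3$ — but what matters is the number of cells meeting the ``outgoing'' face(s) of the corner cube $Q_1$. (2) Identify which faces of $Q_1$ face toward $\Gamma(Q_1)^c$ (the $d$ faces of $Q_1$ not on the boundary $\partial [0,1]^d$; note $Q_1$ touches $d$ coordinate hyperplanes $x_i=0$ and has $d$ ``inner'' faces $x_i=1/5$), and count the level-$m$ subcells of $F^{(d)}_m$ that abut such a face — this face-count, after $m$ iterations of the hole-punching rule, is $2^m$ in dimension $2$ and $16^m$ in dimension $3$ (a $5\times5$ face loses one interior square per side-hole, etc.; one should verify $4^2=16$ and the $d=2$ count of $2$ at level one carefully, then iterate). (3) From each such boundary subcell, run a straight lattice path across the separating corridor to $S^m(\Gamma(Q_1)^c)$, of length at most $5^m+1$ edges, staying inside $\mcQ^{(d)}_{n+m}(F^{(d)})$; since these paths emanate from distinct cells and travel in parallel, arrange them to be edge-disjoint. (4) For an arbitrary admissible $f$ (equal to $1$ on $S^m(Q_1)$, $0$ on $S^m(\Gamma(Q_1)^c)$), apply the Jensen estimate on each path and sum over the (edge-disjoint) family to conclude $\mathcal{E}^{n+m}_p(f)\ge (\#\text{paths})\,(5^m+1)^{1-p}$; taking the infimum over $f$ gives the claim.

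The main obstacle I expect is step (2)–(3): one must check that the relevant boundary subcells of the iterated construction really do connect, by a path of length at most $5^m+1$ that stays within $F^{(d)}$, to $S^m(\Gamma(Q_1)^c)$, and that the hole-punching near the center of each sub-cube does not block these corridors or force the paths to be longer. Since the holes are only attached to the central sub-cube of each level and our paths run from a corner cube outward through the outer shell of cells, the corridors should survive, but this needs a careful picture (the paper references Figure 2). A secondary subtlety is getting the face-count exactly right — proving by induction on $m$ that the number of level-$m$ cells of $F^{(d)}_m$ meeting a given outer face of $Q_1$ equals $2^m$ (for $d=2$) and $16^m$ (for $d=3$) — and making the edge-disjointness of the family of parallel paths rigorous (e.g.\ by indexing paths by the transverse coordinates of their starting subcells). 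Everything else (the Jensen inequality, passing to the infimum) is routine.
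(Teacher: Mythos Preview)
Your overall strategy---lower-bounding the discrete $p$-energy by summing the Jensen/H\"older estimate over a family of edge-disjoint straight paths of length $5^m+1$---is exactly the paper's approach. But steps (2)--(3) contain a real gap in the counting, and it is precisely the obstacle you flagged and then dismissed.

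The ``face-count'' you describe is not $2^m$ (resp.\ $16^m$): since no removed sub-cube of $F^{(d)}_1$ touches $\partial[0,1]^d$, every one of the $5^{(d-1)m}$ level-$m$ cells along an inner face of $Q_1$ is present in $S^m(Q_1)$. What fails is the claim that a straight horizontal path from each such face cell stays inside $F^{(d)}$. The corridor is \emph{not} hole-free: the neighboring level-$1$ cell $[1/5,2/5]\times[0,1/5]^{d-1}$, through which your paths must pass, carries its own scaled copy of the hole pattern at every level. For $d=2$, already at the first iteration only the bottom and top rows (heights in $[0,1/5]\cup[4/5,1]$ in rescaled coordinates) carry an unobstructed horizontal line across a copy of $F^{(2)}_1$; the middle three rows are each cut by one of the four removed squares. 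That is where the factor $2$ comes from---it is a surviving-corridor count, not a face count---and your sentence ``since the holes are only attached to the central sub-cube \dots\ the corridors should survive'' is exactly what goes wrong.

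The paper makes this precise by introducing an auxiliary ``boundary'' fractal $G^{(d-1)}$ (the self-similar set generated by those cells of $\mcQ^{(d-1)}_1(F^{(d-1)}_1)$ that meet $\partial[0,1]^{d-1}$; a $2$-piece Cantor set when $d=2$, a $16$-piece generalized carpet when $d=3$) and proving the product inclusion $[0,1]\times G^{(d-1)}\subset F^{(d)}$. The paths are then indexed by $\wt Q\in \mcQ^{(d-1)}_{m+1}\big(G^{(d-1)}\cap[0,1/5]^{d-1}\big)$, and the counts $2^m$, $16^m$ are exactly the cardinality of that set. With this in hand, edge-disjointness (distinct transverse cells $\wt Q$) and the H\"older estimate along each path go through exactly as you outlined.
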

\begin{figure}[htp]
	\includegraphics[width=4cm]{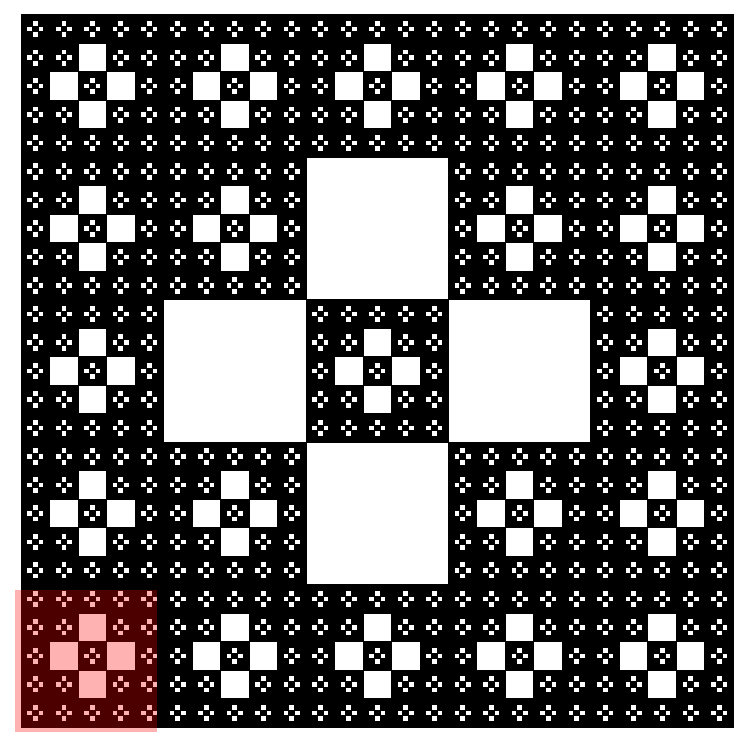}
	\caption{$Q_1$ marked red}\label{fig2}
\end{figure}

\begin{proof}
For $d\geq 1$, we define $G^{(d)}_1$ as 
\[
G^{(d)}_1:=\bigcup \left\{Q\in \mathcal{Q}^{(d)}_1(F^{(d)}_1):\,Q\cap \partial F_0^{(d)}\neq \emptyset \right\},
\]
and inductively,   define for $n\geq 2$,  
\[
G^{(d)}_n=\bigcup_{Q\in\mcQ^{(d)}_1(G^{(d)}_1)}\Psi_Q(G_{n-1}^{(d)})\ \hbox{ and }\ G^{(d)}=\bigcap_{n=0}^\infty G^{(d)}_n.
\]
Note that when $d=1$, $G^{(d)}$ is a Cantor set; when $d=2$, $G^{(d)}$ is a generalized Sierpi\'{n}ski carpet in the sense of \cite[\S 2.2]{BBKT}. Moreover, 
\[
[0,1]\times G^{(d-1)}\subset F^{(d)}\quad\hbox{ for } d\geq 2.
\]

Let $h\in l\big(\mcQ^{(d)}_{m+1}(F^{(d)})\big)$ be a function that satisfies
\[
h|_{S^m(Q_1)}=1 \quad \hbox{ and } \quad h|_{S^m(\Gamma(Q_1)^c)}=0.
\]
For each $\wt{Q}\in \mcQ^{(d-1)}_{m+1}(G^{(d-1)}\cap [0,1/5]^{d-1})$, consider the path of cells 
\[
Q_{\wt{Q},i}=\Big[\frac{5^m-1+i}{5^{m+1}},\frac{5^m+i}{5^{m+1}} \Big]\times\wt{Q}\quad  \hbox{for  }  i=0,1,\cdots,5^m+1.
\]
In particular, one can check that $Q_{\wt{Q},0}\in S^m(Q_1)$, $Q_{\wt{Q},5^m+1}\in S^m\big(\Gamma(Q_1)^c\big)$ and $Q_{\wt{Q},i}\in \mcQ^{(d)}_{m+1}([0,1]\times G^{(d-1)})\subset \mcQ^{(d)}_{m+1}(F^{(d)})$ for each $i=0,1,\cdots,5^{m+1}$.
Then, we have 
\[
\begin{aligned}
\mathcal{E}_p^{m+1}(h)&\geq \sum_{\wt{Q}\in \mcQ^{(d-1)}_{m+1}(G^{(d-1)}\cap [0,1/5]^{d-1})}\sum_{i=0}^{5^m}\big(h(Q_{\wt{Q},i+1})-h(Q_{\wt{Q},i})\big)^p\\
&\geq \#\mcQ^{(d-1)}_{m+1}(G^{(d-1)}\cap [0, {1}/{5}]^{d-1})\cdot (5^m+1)^{1-p},
\end{aligned}
\]
where in the second inequality, we used the fact that $h(Q_{\wt{Q},0})=1$ and $h(Q_{\wt{Q},5^{m}+1})=0$ for each $\wt{Q}\in \mcQ^{(d-1)}_{m+1}(G^{(d-1)}\cap [0,1/5]^{d-1})$ and the H\"older's inequality that 
$$
 (M+1)^{-1} \big|\sum_{k=0}^M a_k \big|   \leq \Big( \sum_{k=0}^M |a_k|^p /(M+1) \Big)^{1/p}. 
$$
The lemma follows immediately after noticing that $\#\mcQ^{(d-1)}_{m+1}(G^{(d-1)}\cap [0,1/5]^{d-1})=2^m$ when $d=2$, and  $\#\mcQ^{(d-1)}_{m+1}(G^{(d-1)}\cap [0,1/5]^{d-1})=16^m$ when $d=3$. 
\end{proof}

\begin{remark} \label{R:2.2} \rm 
		 Lemma \ref{lemma21} has the following  geometric implication.
		  By \cite[Theorems 4.7.6 and 4.9.1]{Ki1}, for $F^{(d)}$ with $d=2,3$ and the Euclidean metric $\rho$, we know that 
		\[
		\limsup_{m\to\infty}\max_{Q\in \bigcup_{n\geq 1}\mcQ^{(d)}_n(F^{(d)})}\big(\mathcal{E}_{p,m}(Q,\Gamma(Q)^c)\big)^{1/m}<1\quad\hbox{ if and only if }\quad p>\dim_{AR}(F^{(d)},\rho),
		\]
		where $\operatorname{dim}_{AR}(F^{(d)},\rho)$ is the Ahlfors regular conformal dimension of the metric space $(F^{(d)},\rho)$, that is, 
		\begin{eqnarray*}
			\operatorname{dim}_{AR}(F^{(d)},\rho)&:=&\inf\{\alpha: \mbox{there exists a metric $\rho'$ on $F^{(d)}$ that is quasi-symmetric to $\rho$ and}\\ 
			&& \hskip 0.1truein \mbox{a Borel regular measure $\mu'$ that is $\alpha$-Ahlfors regular with respect to $\rho'$.}\}
		\end{eqnarray*}
      This together with Lemma \ref{lemma21} implies that 
		\[\operatorname{dim}_{AR}(F^{(2)},\rho)  \geq   1+ \frac{\ln 2}{\ln 5} = \frac{\ln 10}{\ln 5}
		\quad\hbox{ and }  \quad\operatorname{dim}_{AR}(F^{(3)},\rho) \geq 1+ \frac{\ln 16}{\ln 5} = \frac{\ln 80}{\ln 5}.
		\]
On the hand, since $(F^{(d)}, \rho)$ is ${\rm dim}_H F^{(d)}$-Ahlfors regular with the  Hausdorff measure on $F^{(d)}$,
we have by the definition of Ahlfors regular conformal dimension and \eqref{e:1.3a} that 
$$
\operatorname{dim}_{AR}(F^{(d)},\rho) \leq {\rm dim}_H (F^{(d)},\rho) = \frac{\log (5^d-2d)}{\log 5}.
$$ 
 Consequently, we get 
 $$
  \frac{\ln 10}{\ln 5} \leq \operatorname{dim}_{AR}(F^{(2)},\rho)  \leq \frac{\log 21}{\log 5} \quad \hbox{and} \quad 
 \frac{\ln 80}{\ln 5} \leq \operatorname{dim}_{AR}(F^{(3)},\rho)  \leq \frac{\log 119}{\log 5}.
$$
\qed
\end{remark}

\begin{lemma}\label{lemma22}
	Let $Q_2=[{2}/{5},  {3}/{5}]^d$ (see Figure \ref{fig3} for an illustration). We have 
	\[
	\begin{aligned}
		\mathcal{E}_{p,m}(Q_2,\Gamma(Q_2)^c)&\leq 4  \qquad &\hbox{for }d=2,   \hbox{ and }  m\geq1, \\
		\mathcal{E}_{p,m}(Q_2,\Gamma(Q_2)^c)&\leq 7(12\cdot 5^m-16)  \quad  &\hbox{for }d=3  \hbox{ and }  m\geq1. 
	\end{aligned}
	\]
\end{lemma}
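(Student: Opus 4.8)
The plan is to construct an explicit admissible function $f$ on $\mcQ^{(d)}_{m+1}(F^{(d)})$ with $f|_{S^m(Q_2)}=1$ and $f|_{S^m(\Gamma(Q_2)^c)}=0$, and to bound $\mathcal{E}_p^{m+1}(f)$ from above by counting the edges on which $f$ changes value. Observe that $Q_2=[2/5,3/5]^d$ is the central cube of level $1$, so $\Gamma(Q_2)$ consists of $Q_2$ together with its level-$1$ neighbors; since the cubes adjacent to $Q_2$ through a $(d-1)$-face have been erased in passing from $F^{(d)}_0$ to $F^{(d)}_1$, the set $\Gamma(Q_2)$ is fairly small and $\Gamma(Q_2)^c$ stays at Euclidean distance bounded below from $Q_2$. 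The key point is that $S^m(Q_2)$ is a scaled copy of $F^{(d)}$ sitting inside the central subcube, and one wants $f$ to drop from $1$ to $0$ across a thin "collar" of level-$(m+1)$ cells inside $S^m(Q_2)$ itself — so the number of pairs $(Q,Q')$ with $Q\cap Q'\ne\emptyset$ and $f(Q)\ne f(Q')$ is at most (a constant times) the number of level-$(m+1)$ cells meeting the topological boundary of the central subcube within $F^{(d)}$.

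Concretely, for $d=2$ I would take $f=1$ on $S^m(Q_2)$ except on the outermost "ring" of level-$(m+1)$ cells of $S^m(Q_2)$, i.e. those meeting $\partial([2/5,3/5]^2)$, where I set $f=0$; and $f=0$ everywhere outside $S^m(Q_2)$. Then every edge carrying an energy contribution lies between a cell of this ring and an adjacent cell, and the ring (inside $F^{(2)}$, which is $[0,1]\times G^{(1)}$-like near the boundary) has a bounded number of cells: by the construction of $F^{(2)}$, the intersection of $\partial([2/5,3/5]^2)$ with the relevant scaled fractal consists of only finitely many cells whose count does not grow with $m$ — in fact four corner cells suffice, giving the bound $4$. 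For $d=3$ the analogous ring around the central subcube has cardinality growing like $12\cdot 5^m-16$ (the edges of a cube of side $5^m$ cells, counted with the corner corrections), and each such cell contributes at most $7$ edges (its neighbors within the carpet), yielding $\mathcal{E}_{p,m}(Q_2,\Gamma(Q_2)^c)\le 7(12\cdot 5^m-16)$; here one also uses that $|a-b|^p\le 1$ when $a,b\in\{0,1\}$, so $p$ plays no role and the bound is uniform in $p\in(1,\infty)$.

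The main obstacle, and where care is needed, is verifying the combinatorics: one must check that the chosen collar of cells indeed separates $S^m(Q_2)$ from $S^m(\Gamma(Q_2)^c)$ inside the graph $\mcQ^{(d)}_{m+1}(F^{(d)})$ — that is, that no level-$(m+1)$ cell of $S^m(Q_2)$ lying in the interior (where $f=1$) is adjacent to a cell outside $S^m(Q_2)$ (where $f=0$) without passing through the collar. This is where the peculiar geometry of $F^{(d)}$ (the erased cross, the failure of the interior connectedness condition (H2)) must be used: because the faces of the central subcube of $F^{(d)}_1$ are "exposed" only through lower-dimensional contact, the boundary cells of $S^m(Q_2)$ form the right cut set, and I would make this precise by exhibiting, for $d=2$, the explicit list of cells in $\partial([2/5,3/5]^2)\cap F^{(2)}$ at level $m+1$ (the four copies of a Cantor-set-indexed stack of cells near the corners), and for $d=3$ a careful count of the cells in $\partial([2/5,3/5]^3)\cap F^{(3)}$, keeping track of which of the six faces, twelve edges and eight corners actually survive. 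The remaining step — bounding the number of neighbors of each such cell by a dimension-dependent constant ($3$ or so for $d=2$, $7$ for $d=3$) and summing — is routine.
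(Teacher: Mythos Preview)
Your test function is not admissible. The definition of $\mathcal{E}_{p,m}(Q_2,\Gamma(Q_2)^c)$ requires $f\equiv 1$ on \emph{all} of $S^m(Q_2)$, yet you set $f=0$ on a ring of cells lying \emph{inside} $S^m(Q_2)$. Such an $f$ is not a competitor in the infimum, so even if its energy were small it would tell you nothing about $\mathcal{E}_{p,m}(Q_2,\Gamma(Q_2)^c)$. Worse, the energy of your $f$ is not small: if the ring is the full boundary layer of $S^m(Q_2)$ (of order $5^m$ cells when $d=2$), the interior--to--ring edges already contribute of order $5^m$; if the ring is only the four corner cells, the interior--to--ring edges still add roughly a dozen, not four.

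The correct and simpler choice---which is what the paper does---is the indicator: set $f=1$ on every cell of $S^m(Q_2)$ and $f=0$ on every other cell of $\mcQ^{(d)}_{m+1}(F^{(d)})$. This is trivially admissible, and $\mathcal{E}_p^{m+1}(f)$ is exactly the number of graph edges with one endpoint in $S^m(Q_2)$ and the other outside. Now your geometric observation kicks in: because the $2d$ level-$1$ cubes sharing a $(d-1)$-face with $Q_2$ have been deleted, the only contact between $Q_2$ and the rest of $F^{(d)}$ at level $1$ is through the corners of $Q_2$ when $d=2$, and through the $1$-dimensional edges of $Q_2$ when $d=3$. For $d=2$ this yields exactly four crossing edges (one per corner, each meeting a single diagonal neighbor), and for $d=3$ the cells of $S^m(Q_2)$ touching an edge of $Q_2$ number $12\cdot 5^m-16$, each with at most $2^3-1=7$ outside neighbors---precisely the counts you already extracted. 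With this $f$ there is no ``separation'' condition to verify; the energy is nothing more than the edge count.
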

\begin{figure}[htp]
	\includegraphics[width=4cm]{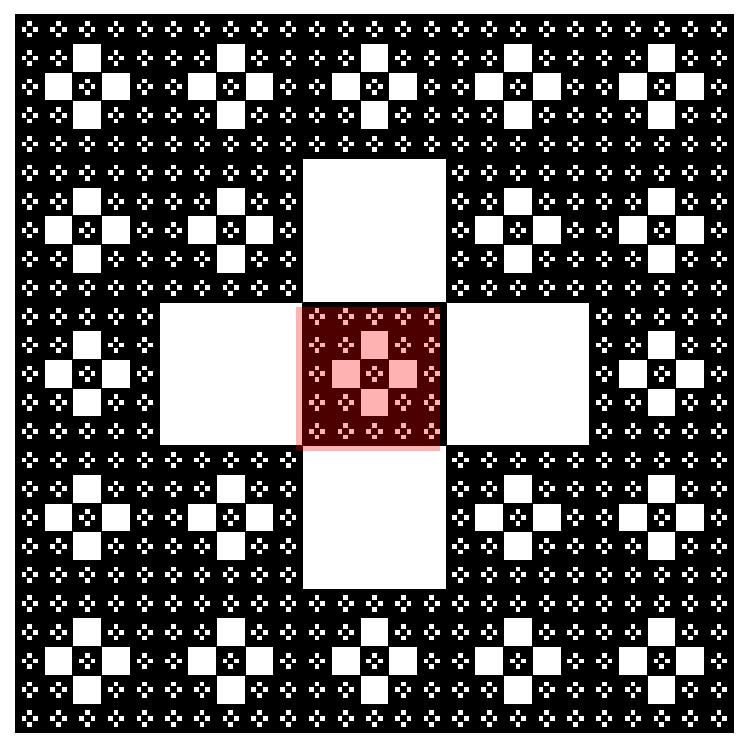}
	\caption{$Q_2$ marked red}\label{fig3}
\end{figure}
\begin{proof}
Define $f\in l(\mcQ^{(d)}_{m+1}(F^{(d)}))$ by
\[
f(Q)=\begin{cases}
	1\quad\hbox{ if }Q\in \mcQ^{(d)}_{m+1}(F^{(d)}\cap Q_2),\\
	0\quad\hbox{ if }Q\in \mcQ^{(d)}_{m+1}(F^{(d)}\setminus Q_2).
\end{cases}
\] 
Then 
\[
\mathcal{E}_{p,m}(Q_2,\Gamma(Q_2)^c)\leq \mathcal{E}_p^{m+1}(f)=\sum_{Q\in S^m(Q_2)}\sum_{Q'\in S^m(\Gamma(Q_2)^c)\atop Q\cap Q'\neq\emptyset}\big(f(Q)-f(Q')\big)^p.
\]
For $d=2$, there are only $4$ $Q\in S^m(Q_2)$ such that $Q'\cap Q\neq\emptyset$ for some $Q'\in S^m(\Gamma(Q_2)^c)$, and each $Q$ intersects exactly with one such $Q'$, so we have $\mathcal{E}_{p,m}(Q,\Gamma(Q)^c)\leq \mathcal{E}_p^{m+1}(f)=4$; for $d=3$, there are $12\cdot 5^m-16$ $Q\in S^m(Q_2)$ such that  $Q'\cap Q\neq\emptyset$ for some $Q'\in S^m(\Gamma(Q_2)^c)$ (that is all the cells attached to the boundary edges of the $Q$), and each $Q$ intersects with at most $7=2^3-1$ $Q'$, so we have $\mathcal{E}_{p,m}(Q_2,\Gamma(Q_2)^c)\leq  \mathcal{E}_p^{m+1}(f)\leq 7(12\cdot 5^m-16)$.
\end{proof}

\medskip

\begin{proof}[Proof of Theorem \ref{thm1}] For $d=2$, when $1< p<1+\frac{\log 2}{\log 5}$, we have  
$$
 \lim_{m\to \infty} \frac{\mathcal{E}_{p,m}(Q_1,\Gamma(Q_1)^c)}{\mathcal{E}_{p,m}(Q_2,\Gamma(Q_2)^c)}= \infty,
 $$
  where $Q_1,Q_2  \in \mcQ^{(2)}_{1}(F^{(2)})$ are the cells in the statements of Lemmas \ref{lemma21} and \ref{lemma22},
  respectively. Hence ($\textbf{A}_p$) can not
   hold for $p\in (1,\frac{\log 10}{\log 5})$. 

For $d=3$, when $1< p<\frac{\log 16}{\log 5}$, we have  that $ \lim_{m\to \infty}  \frac{\mathcal{E}_{p,m}(Q_1,\Gamma(Q_1)^c)}{\mathcal{E}_{p,m}(Q_2,\Gamma(Q_2)^c)} =  \infty$, where $Q_1,Q_2 \in\mcQ_{1}^{(3)}(F^{(3)})$ are the cells of Lemmas \ref{lemma21} and \ref{lemma22}. Hence ($\textbf{A}_p$) can not hold for $p\in (1, \frac{\log 16}{\log 5})$.
\end{proof}

\bigskip

\begin{proof}[Proof of Proposition \ref{P:1.7}]  
For each $p>1$,  define the discrete $p$-energy $\wt \mcE^{m}_{p}$ on $\mcQ^{(2)}_m(\wt F^{(2)})$ by 
\[
\wt \mcE^{m}_{p}(f)=\sum_{Q,Q'\in \mcQ^{(2)}_m(\wt F^{(2)})\atop 
Q\cap Q'\neq\emptyset}\big(f(Q)-f(Q')\big)^p.
\] 
Let $\nu$ be  the normalized Hausdorff measure on $\wt F^{(2)}$ such that $\nu(\wt F^{(2)})=1$
For $n\geq 0$ and $f\in L^p(\wt F^{(2)};\nu)$,  define 
$ P_nf\in l\big(\mcQ^{(2)}_n(\wt F^{(2)})\big)$ by 
$$
P_nf(Q)= \frac1{\nu \big( Q\cap \wt F^{(2)} \big)} \int_{\wt F^{(2)}\cap Q}f(w)\nu(dw) 
\quad  \hbox{for each }  Q\in \mcQ^{(2)}_n(\wt F^{(2)}). 
$$ 
For each $n\geq 1$, $m\geq 0$ and $Q\in \mcQ^{(2)}_n(\wt F^{(2)})$, we define  
\[
\begin{aligned}
\wt \mcE_{p,m}\big(Q,\Gamma(Q)^c\big):=\inf \Big\{\wt \mcE^{m+n}_p(P_{m+n}f):\,f&\in L^p(\wt F^{(2)};\nu),\,f|_{Q}=1,\,f|_{Q'}=0\\
&\hbox{for each }Q'\in \mcQ^{(2)}_n(\wt F^{(2)})\hbox{ such that }Q'\cap Q=\emptyset
\Big\}.
\end{aligned}
\] 
According to  \cite[Theorem 10.2 and Remark 10.20]{MS}, there is some $\wt \sigma_p>0$
 so that 
\begin{equation}\label{e:2.1}
\wt \mcE_{p,m}\big(Q,\Gamma(Q)^c\big)\asymp \wt \sigma_p^{-m}
\hbox{ for every }m\geq 1,\,Q\in \mcQ_n^{(2)} (\wt F^{(2)}).
\end{equation}
Moreover, for $Q_1=[0, {1}/{5}]\times [0, {1}/{5}]$  and $p>1$, by \cite[Theorem 6.17, Theorem 10.2 and Remark 10.20]{MS}, we can find $f_p \in C(\wt F^{(2)})$ such that 
$  f_p|_{Q_1}=1$, $\ f_p|_{Q'}=0$  for each  $Q'\in\mcQ^{(2)}_1(\wt F^{(2)})$ with $Q'\cap Q_1=\emptyset$ and that 
$$
\wt \mcE^{m}_p(P_mf_p)\asymp \wt \sigma_p^{-m}  \quad \hbox{for all } m\geq 1.
$$
As a consequence, we have for each $p>p'>1$, 
$$
\frac{\wt \sigma_{p}^{-m}}{\wt \sigma_{p'}^{-m}}\lesssim \frac{\wt \mcE^{m}_p(P_mf_{p'})}{\wt \mcE^{m}_{p'}(P_mf_{p'})}\leq \sup_{Q,Q'\in \mcQ_m^{(2)}(\wt F^{(2)})\atop Q\cap Q'\neq\emptyset}|P_mf_{p'}(Q)-P_mf_{p'}(Q')|^{p-p'}\to 0
$$
as $m\to\infty$. This implies that $\wt \sigma_p$ is strictly increasing in $p\in (1, \infty)$.  Moreover, we can easily check that $\wt F^{(2)}$ satisfies \cite[Assumption 2.15]{Ki2} with $M_*=1$. Hence by by \eqref{e:2.1} and \cite[Proposition 3.3]{Ki2}, we know that $\wt \sigma_p>1$ if and only if $p>\dim_{AR}(\wt F^{(2)},\rho)$, which together with the fact that $\wt \sigma_p$ is strictly increasing implies  $\wt \sigma_p<1$ if $p<\dim_{AR}(\wt F^{(2)},\rho)$.  Noticing that $\mcQ_{m+1}^{(2)}(\wt F^{(2)})\subset \mcQ_{m+1}^{(2)}(F^{(2)})$ for each $m\geq 0$, we see that for 
$1<p<\operatorname{dim}_{AR}(\wt F^{(2)},\rho)$, 
\begin{equation}\label{eqn21}
\lim\limits_{m\to\infty}\mcE_{p,m}\big(Q_1,\Gamma(Q_1)\big)\geq\lim\limits_{m\to\infty}\wt \mcE_{p,m}\big(Q_1,\Gamma(Q_1)\big)
\gtrsim \lim\limits_{m\to\infty} \wt \sigma_p^{-m} =\infty. 
\end{equation}
 This together with Lemma \ref{lemma22} yields that property {\rm ($\textbf{A}_p$)} fails for $F^{(2)}$ when $1<p<\operatorname{dim}_{AR}(\wt F^{(2)},\rho)$. 
  \end{proof}

 \vskip 0.2truein

 \hskip 0.2truein
 
\end{document}